\theoremstyle{plain}
\newtheorem{Theorem}{Theorem}[section]		
\newtheorem{Lemma}[Theorem]{Lemma}
\theoremstyle{definition}
\theoremstyle{remark}
\newtheorem{Example}[Theorem]{Example}
\def\Big#1{\makebox(0,0){\huge#1}}
\newcommand{\mat}[3]{\operatorname{Mat}_{#1\times #2}(\mathbb{#3})}
\title{Quantisation Spaces of Cluster Algebras}
\author{Florian Gellert and Philipp Lampe
	\thanks{florian.gellert@math.uni-bielefeld.de, lampe@math.uni-bielefeld.de}
}
\date{\today}
\begin{document}
\maketitle
\begin{abstract}

The article concerns the existence and uniqueness of quantisations of cluster algebras. 
We prove that cluster algebras with an initial exchange matrix of full rank admit a quantisation 
in the sense of Berenstein-Zelevinsky and give an explicit generating set to construct all quantisations.

\end{abstract}

\section{Introduction}

Sergey Fomin and Andrei Zelevinsky have introduced and studied cluster algebras in a series of four
 articles~\cite{FZ1,FZ2,BFZ3,FZ4} (one of which is coauthored by Arkady Berenstein) in order
 to study Lusztig's canonical basis and total positivity. Cluster algebras are commutative algebras which are 
constructed by generators and relations. The generators are called cluster variables and they are grouped
 into several overlapping sets, so-called clusters. A combinatorial mutation process relates the clusters and
 provides the defining relations of the algebra. The rules for this mutation process are encoded 
in a rectangular exchange matrix usually denoted by the symbol $\tilde{B}$. 
Surprinsingly, Fomin-Zelevinsky's Laurent phenomenon~\cite[Thm.~3.1]{FZ1} asserts that 
every cluster variable can be expressed as a Laurent polynomial in an arbitrarily chosen cluster. 
The second main theorem of cluster theory is the classification of cluster algebras with only finitely 
many cluster variables by finite type root systems, see Fomin-Zelevinsky~\cite{FZ2}.

The connections of cluster algebras to other areas of mathematics are manifold. 
A major contribution is Caldero-Chapoton's map~\cite{CC} which relates cluster algebras and 
representation theory of quivers. Another contribution is the construction of cluster algebras 
from oriented surfaces which relates cluster algebras and differential geometry, 
see Fomin-Shapiro-Thurston~\cite{FST1} and Fomin-Thurston~\cite{FT2}.

Arkady Berenstein and Andrei Zelevinsky~\cite{BZ} have introduced the concept of quantum cluster algebras. 
Quantum cluster algebras are $q$-deformations which specialise to the ordinary 
cluster algebras in the classical limit $q=1$. Such quantisations play an important role in cluster theory: 
on the one hand, quantisations are essential when trying to link cluster algebras to Lusztig's canonical bases, 
see for example~\cite{Lu,Le,La1,La2,GLS,HL}. 
On the other hand, Goodearl-Yakimov~\cite{GY} use quantisations to approximate cluster algebras 
by their upper bounds. 
The latter result is particularly important since it enables us to study cluster algebras as unions of 
Laurent polynomial rings.

In general, the notion of $q$-deformation turns former commutative structures into 
non-commutative ones. In the case of quantum cluster algebras, this yields $q$-commutativity between 
variables within the same quantum cluster which is stored in an additional matrix usually denoted by the 
symbol $\Lambda$. In order to keep the $q$-commutativity intact under mutation, 
Berenstein and Zelevinsky require some compatibility relation between
the matrices $\tilde{B}$ and $\Lambda$. 
The very same compatibility condition also parametrises compatible Poisson structures for 
cluster algebras, see Gekhtman-Shapiro-Vainshtein~\cite{GSV}. 

Unfortunately, not every cluster algebra admits a quantisation, 
because not every exchange matrix admits a compatible $\Lambda$. 
But in the case where there exists such a quantisation, 
Berenstein-Zelevinsky have shown that $\tilde{B}$ is of full rank. 

This paper has several aims. Firstly, we reinterpret what it means for quadratic exchange matrices 
to be of full rank via Pfaffians and perfect matchings. 
Secondly, we show the converse of the above statement 
(which was posted as a question during Zelevinsky's lecture at a workshop): 
assuming $\tilde{B}$ is of full rank, there always exists a quantisation. 
This result we show by using concise linear algebra arguments. 
It should be noted that Gekhtman-Shapiro-Vainshtein in~\cite[Thm.~4.5]{GSV} prove a similar statement 
in the language of Poisson structures. 
Thirdly, when a quantisation exists, it is not necessarily unique. 
This ambiguity we make more precise by relating all such quantisations 
via matrices we construct from a given $\tilde{B}$ using particular minors. 

\section{Berenstein-Zelevinsky's quantum cluster algebras}

\subsection{Notation}

Let $m,n$ be integers with $1 \leq n \leq m$ and 
$A$ an $m\times n$ matrix with integer entries.
For $n<m$ we use the notation $[n,m] = \{n+1, \ldots, m\}$ and in the case $n=1$ the 
shorthand $[m]=[1, m]$. 
For a subset $J \subseteq [m]$ we denote by $A_J$ the submatrix of $A$
with rows indexed by $J$ and all columns. By $q$ we denote throughout the paper a formal
indeterminate.

\subsection{The definition of quantum cluster algebras}

Let $\tilde{B}=(b_{i,j})$ be an $m\times n$ matrix 
with integer entries. For further use we write 
$\tilde{B}=\left[\begin{smallmatrix}B\\C\\\end{smallmatrix}\right]$
in block form with an $n\times n$ matrix $B$ and an $(m-n)\times n$ matrix $C$. 
The matrix $B$ is called the \emph{principal part} of $\tilde{B}$. 
We call indices $i\in [n]$ \emph{mutable} and the indices $j\in [n+1,m]$ \emph{frozen}. 

We say that the principal part $B$ is \emph{skew-symmetrisable} if there exists a diagonal $n\times n$ matrix 
$D=\textrm{diag}(d_1,d_2,\ldots,d_n)$ with positive integer diagonal entries such that the matrix 
$DB$ is skew-symmetric, i.\,e. $d_ib_{i,j}=-d_jb_{j,i}$ for all $1\leq i,j\leq n$. 
The matrix $D$ is then called a \emph{skew-symmetriser} for $B$ and $\tilde{B}$ is called an 
\emph{exchange matrix}. Note that skew-symmetrising from the right yields identical restraints and
$b_{i,j}\neq 0$ if and only if $b_{j,i}\neq 0$. 

The skew-symmetriser is essentially unique by the following discussion:
Consider  the unoriented simple graph $\Delta(B)$ with 
vertex set $\{1,2,\ldots ,n\}$ such that there is an edge between two vertices $i$ and $j$ if and only if $b_{ij}\neq 0$. 
We say that the principal part $B$ is \emph{connected} if $\Delta(B)$ is connected. 
Note that the connectedness of the principal part is mutation invariant, i.\,e. 
if $B$ is connected, then the principal part of $\mu_k(\tilde{B})$ is connected for all $1\leq k\leq n$ as well. 

Assume now that $B$ is connected. 
Suppose there exist two diagonal $n\times n$-matrices
 $D$ and $D'$ with positive integer diagonal entries such that both $DB$ and $D'B$ are skew-symmetric. 
Then there exists a rational number $\lambda$ with $D=\lambda D'$, 
as for all indices $i,j$ with $b_{ij}\neq 0$ the equality $d_i/d_j=d'_i/d'_j$ holds true. 
We refer to the smallest such $D$ as the \emph{fundamental skew-symmetriser}.   
If $B$ is not connected, then every skew-symmetriser $D$ is a $\mathbb{N}^{+}$-linear combination 
of the fundamental skew-symmetrisers of the connected components of $B$. 

This concludes the discussion of the first datum to construct quantum cluster algebras. 
The next piece of data is the notion of compatible matrix pairs. 

From now on, assume that $\tilde{B}=\left[\begin{smallmatrix}B\\C\\\end{smallmatrix}\right]$ 
is a not necessarily connected matrix with skew-symmetrisable principal part $B$.
A skew-symmetric $m\times m$ integer matrix $\Lambda = (\lambda_{i,j})$ is called \emph{compatible} 
if there exists a diagonal $n\times n$ matrix $D'=\textrm{diag}(d'_1,d'_2,\ldots,d'_n)$ with 
positive integers $d'_1,d'_2,\ldots,d'_n$ such that 
\begin{equation}\label{eq:comp}
  \tilde{B}^T\Lambda =\left[\begin{matrix}D'&0\end{matrix}\right]
\end{equation}
as a $n\times n$ plus $n\times (m-n)$ block matrix. 
In this case we call $(\tilde{B}, \Lambda)$ a \emph{compatible} pair.
To any $m\times n$ matrix $\tilde{B}$ there need not exist a compatible $\Lambda$. 
As a necessary condition Berenstein-Zelevinsky~\cite[Prop.~3.3]{BZ} note that if a matrix $\tilde{B}$ 
belongs to a compatible pair $(\tilde{B}, \Lambda)$, then its principal part $B$ is skew-symmetrisable, 
$D'$ itself is a skew-symmetriser and $\tilde{B}$ itself is of full rank, i.\,e. $\textrm{rank}(\tilde{B})=n$.

Let us fix a compatible pair $(\tilde{B},\Lambda)$.
We are now ready to complete the necessary data to define  quantum cluster algebras. 
First of all, let $\{e_i\colon 1\leq 1\leq m\}$ be the standard basis of $\mathbb{Q}^m$. 
With respect to this standard basis the skew-symmetric matrix $\Lambda$ defines a 
skew-symmetric bilinear form $\beta\colon\mathbb{Q}^m\times \mathbb{Q}^m\to \mathbb{Q}$. 
The \emph{based quantum torus} $\mathcal{T}_{\Lambda}$ associated with $\Lambda$ 
is the $\mathbb{Z}[q^{\pm 1}]$-algebra with $\mathbb{Z}[q^{\pm 1}]$-basis $\{X^a\colon a\in\mathbb{Z}^m\}$ 
where we define the multiplication of basis elements by the formula 
$X^aX^b=q^{\beta(a,b)}X^{a+b}$ for all elements $a,b\in\mathbb{Z}^m$. 
It is an associative algebra with unit $1=X^0$ and every basis element $X^a$ has an inverse $(X^a)^{-1}=X^{-a}$. 
The based quantum torus is commutative if and only if $\Lambda$ is the zero matrix, 
in which case $\mathcal{T}_{\Lambda}$ is a Laurent polynomial algebra. 
In general, it is an Ore domain, see~\cite[Appendix]{BZ} for further details.
We embed $\mathcal{T}_{\Lambda}\subseteq \mathcal{F}$ into an ambient skew field.

Although $\mathcal{T}_{\Lambda}$ is not commutative in general, the relation $X^aX^b=q^{2\beta(a,b)}X^bX^a$ 
holds for all elements $a,b\in\mathbb{Z}^m$. 
Because of this relation we say that the basis elements are $q$-\emph{commutative}. 
Put $X_i=X^{e_i}$ for all $i\in[m]$. The definition implies $X_iX_j=q^{\lambda_{i,j}}X_jX_i$ for all $i,j\in[m]$. 
Then we may write 
$\mathcal{T}_{\Lambda}=\mathbb{Z}[q^{\pm 1}][X_1^{\pm 1},X_2^{\pm 1},\ldots,X_m^{\pm 1}]$ 
and the basis vectors satisfy the relation
\begin{align*}
X^{a}=q^{\sum_{i>j}\lambda_{i,j}a_ia_j}X_1^{a_1}X_2^{a_2}\cdot\ldots\cdot X_m^{a_m}
\end{align*}
for all $a=(a_1,a_2,\ldots,a_m)\in\mathbb{Z}^m$. 

We call a sequence of pairwise $q$-commutative and algebraically independent elements 
such as $X=(X_1,X_2,\ldots,X_m)$ in $\mathcal{F}$ an \emph{extended quantum cluster}, 
the elements $X_1,X_2,\ldots,X_n$ of an extended quantum cluster \emph{quantum cluster variables}, 
the elements $X_{n+1},X_{n+2},\ldots,X_m$ \emph{frozen variables} and 
the triple $(\tilde{B},X,\Lambda)$ a \emph{quantum seed}.

Let $k$ be a mutable index. 
Define mutation map $\mu_k\colon (\tilde{B},X,\Lambda) \mapsto (\tilde{B}',X',\Lambda')$ as follows: 
\begin{enumerate}[($M_1$)]
	\item The matrix $\tilde{B}' = \mu_k(\tilde{B})$ the well-known 
		mutation of skew-symmetrisable matrices.
	\item The matrix $\Lambda' = (\lambda_{i,j}')$ is the $m\times m$ matrix with entries 
		$\lambda_{i,j}'=\lambda_{i,j}$ except for
		\[
			\begin{aligned}
				\lambda_{i,k}' &= -\lambda_{i,k} + \sum_{r\neq k} \lambda_{i,r} \max(0,-b_{r,k}) \text{ for all } i \in [m]\backslash \{k\},\\
				\lambda_{k,j}' &= -\lambda_{k,j} - \sum_{r\neq k} \lambda_{j,r} \max(0,-b_{r,k}) \text{ for all } j \in [m]\backslash \{k\}.
		\end{aligned}
		\]
	\item To obtain the quantum cluster $X'$ we replace the element $X_k$ with the element 
		\[
			X'_k=X^{-e_k+\sum \limits_{i=1}^{m} \max(0,b_{i,k})e_i}+X^{-e_k+\sum \limits_{i=1}^{m} \max(0,-b_{i,k})e_i}\in\mathcal{F}.
		\]
\end{enumerate}
The variables $X'=(X_1',X_2',\ldots,X_m')$ are pairwise $q$-commutative: for all $j\in[m]$ with $j\neq k$ the integers
\begin{align*}
	&\beta\left(-e_k+\sum_{i=1}^m\max(0,b_{i,k})e_i,e_j\right)=-\lambda_{k,j}+\sum_{i=1}^m\max(0,b_{i,k})\lambda_{i,j}\\
	&\beta\left(-e_k+\sum_{i=1}^m\max(0,-b_{i,k})e_i,e_j\right)=-\lambda_{k,j}+\sum_{i=1}^m\max(0,-b_{i,k})\lambda_{i,j}
\end{align*}
are equal, because their difference is equal to the sum $\sum_{i=1}^m b_{i,k}\lambda_{i,j}$ 
which is the zero entry indexed by $(k,j)$ in the matrix $\tilde{B}^T\Lambda$. 
So the variable $X_k'$ $q$-commutes with all $X_j$. 
Hence the variables $X'=(X'_1,X'_2,\ldots,X'_m)$ generate again a based quantum torus 
whose $q$-commutativity relations are given by the skew-symmetric matrix $\Lambda'$. 
Moreover, the pair $(\tilde{B}',\Lambda')$ is compatible by~\cite[Prop. 3.4]{BZ} so that 
the matrix $\tilde{B}'$ has a skew-symmetrisable principle part. 
We conclude that the mutation $\mu_k(\tilde{B}',X',\Lambda')=(\tilde{B}',X',\Lambda')$ is again an extended quantum seed. 
Note that the mutation map is involutive, i.\,e. $(\mu_k\circ\mu_k)(\tilde{B},X,\Lambda)=(\tilde{B},X,\Lambda)$. 

Here we see the importance of the compatibility condition. 
A main property of classical cluster algebras are the binomial exchange relations. 
For the quantised version we require pairwise $q$-commutativity for the quantum cluster variables in a single cluster. 
This implies that a monomial $X_1^{a_1}X_2^{a_2}\cdot\ldots\cdot X_m^{a_m}$ with $a\in\mathbb{Z}^m$
remains (up to a power of $q$) a monomial under reordering the quantum cluster variables. 

We call two quantum seeds $(\tilde{B},X,\Lambda)$ and $(\tilde{B}',X',\Lambda')$ \emph{mutation equivalent} 
if one can relate them by a sequence of mutations. 
This defines an equivalence relation on quantum seeds, denoted by $ (\tilde{B},X,\Lambda) \sim (\tilde{B}',X',\Lambda')$. 
The \emph{quantum cluster algebra} $\mathcal{A}_q(\tilde{B},X,\Lambda)$ associated to a given quantum seed 
$(\tilde{B},X,\Lambda)$ is the $\mathbb{Z}[q^{\pm 1}]$-subalgebra of $\mathcal{F}$ generated by the set 
\[
	\chi(\tilde{B},X,\Lambda) = \left\{ X_{i}^{\pm 1}\;|\; i \in [n+1,m] \, \right\} \cup 
		\bigcup_{ (\tilde{B}',X',\Lambda') \sim (\tilde{B},X,\Lambda)} \left \{X_i' \;|\; i\in [n] \,\right \}.
\]

The specialisation at $q=1$ identifies the quantum cluster algebra $\mathcal{A}_q(\tilde{B},X,\Lambda)$ with 
the classical cluster algebra $\mathcal{A}(\tilde{B},X)$. Generally, the definitions of classical and 
quantum cluster algebras admit additional analogies. 
One such analogy is the quantum Laurent phenomenon, 
as proven in~\cite[Cor.~5.2]{BZ}: $\mathcal{A}_q(\tilde{B},X,\Lambda) \subseteq \mathcal{T}_\Lambda$.
Remarkably, $\mathcal{A}_q(\tilde{B},X,\Lambda)$ and $\mathcal{A}(\tilde{B},X)$ also possess 
the same exchange graph by~\cite[Thm.~6.1]{BZ}. 
In particular, quantum cluster algebras of finite type are also classified by Dynkin diagrams.

\section{The quantisation space}

\subsection{Remarks on skew-symmetric matrices of full rank}

How can we decide whether an exchange matrix $\tilde{B}$ has full rank? 
Let us consider the case $n=m$. 
Multiplication with a skew-symmetriser $D$ does not change the rank, 
so without loss of generality we may assume that $\tilde{B}=B$ is skew-symmetric. 
In this case, $B=B(Q)$ is the signed adjacency matrix of some quiver $Q$ with $n$ vertices.  

First of all, if $n$ is odd, then $B$ can not be of full rank, because $\det(B)=(-1)^n\det(B)$ implies $\det(B)=0$. 
Especially, no (coefficient-free) cluster algebra attached to a quiver $Q$ with an odd number of vertices admits a quantisation.

Now suppose that $n=m$ is even. 
In this case, a theorem of Cayley \cite{C} asserts that there exists a polynomial $\operatorname{Pf}(B)$ 
in the entries of $B$ such that $\det(B)=\operatorname{Pf}(B)^2$. The polynomial is called the \emph{Pfaffian}. 
For example, if $n=4$, then $\operatorname{Pf}(B)=b_{12}b_{34}-b_{13}b_{24}+b_{14}b_{23}$. 
For general $n$ we have
\[
	\operatorname{Pf}(B)=\sum \operatorname{sgn}(i_1,\ldots,i_{n/2},j_1,\ldots,j_{n/2}) b_{i_1j_1}b_{i_2j_2}\cdots b_{i_{n/2}j_{n/2}}
\]
where the sum is taken over all $(n-1)(n-3)\cdots1$ possibilities of writing the set $\{1,2,\ldots,n\}$ as 
a union $\{i_1,j_2\}\cup\{i_2,j_2\}\cup\ldots \cup\{i_{n/2},j_{n/2}\}$ of $\frac n2$ sets of cardinality $2$ and 
$\operatorname{sgn}(i_1,\ldots,i_{n/2},j_1,\ldots,j_{n/2})\in\{\pm 1\}$ is the sign of the permutation
 $\sigma\in S_n$ with $\sigma(2k-1)=i_k$ and $\sigma(2k)=j_k$ for all $k\in\{1,2,\ldots,\frac{n}{2}\}$, 
cf. Knuth~\cite[Equation (0.1)]{K}. It is easy to see that the sum is well-defined.

Note that in the sum above a summand vanishes unless 
$\{i_1,j_2\},\{i_2,j_2\},\ldots,\{i_{n/2},j_{n/2}\}$ is a \emph{perfect matching} 
of the underlying undirected graph of $Q$. 

For example, let $Q=\overrightarrow{A_n}$ be an orientation of a Dynkin diagram of type $A_n$ with an even number $n$. 
Then $Q$ admits exactly one perfect matching $\{1,2\},\{3,4\},\ldots,\{n-1,n\}$. Hence, $\det(B(Q))=\pm 1$ so that $B(Q)$ is regular. 
The same is true for all quivers $Q$ of type $\overrightarrow{E_6}$ or $\overrightarrow{E_{8}}$. 
On the other hand, there does not exist a perfect matching for a Dynkin diagram of type $D_n$. 
Hence, $\det(B(Q))=0$ for all quivers $Q$ of type $D_n$. 

To summarize, a (coefficient-free) cluster algebra of finite type has a quantization if and only if 
it is of Dynkin type $A_n$ with even $n$ or of type $E_6$ or $E_8$. 
(These are precisely the Dynkin diagrams for which the stable category 
$\underline{\operatorname{CM}}(R)$ of Cohen-Macaulay modules of the corresonding 
hypersurface singularity $R$ of dimension $1$ does not have an indecomposable rigid object, 
see Burban-Iyama-Keller-Reiten~\cite[Theorem 1.3]{BIKR}.)          

\subsection{Existence of quantisation}\label{subsec:existence}

Suppose that $\operatorname{rank}(\tilde{B})=n$. 
In this subsection we prove that the cluster algebra $\mathcal{A}(\tilde{\mathbf{x}},\tilde{B})$ admits a quantisation. 

The $n$ column vectors of $\tilde{B}$ are linearly independent elements in $\mathbb{Q}^m$. 
We extend them to a basis of $\mathbb{Q}^m$ by adding $(m-n)$ appropriate column vectors. 
Hence, there is an invertible $m \times n$ plus $(m-n)\times m$ block matrix 
$\left[\begin{matrix}\tilde{B}&\tilde{E}\end{matrix}\right]\in\operatorname{GL}_m(\mathbb{Q})$ 
which we denote by $M$. We also write $\tilde{E}$ itself in block form as
$\tilde{E}=\left[\begin{smallmatrix}E\\F\\\end{smallmatrix}\right]$
with an $n\times (m-n)$ matrix $E$ and an $(m-n)\times (m-n)$ matrix $F$. 
Of course, the choice for the basis completion is not canonical. 
In particular, one can choose standard basis vectors for columns of $\tilde{E}$, making it sparse.
After these preparations we are ready to state the theorem about the existence of a quantization.

\begin{Theorem}\label{thm:existence}\label{Quantizations}
	Let $D$ be a skew-symmetriser of $B$. 
	There exists a skew-symmetric $m\times m$-matrix $\Lambda$ with integer coefficients 
	and a multiple $D'=\lambda D$ with $\lambda\in\mathbb{Q}^{+}$ such that
	 $\tilde{B}^T\Lambda =\left[\begin{matrix}D'&0\end{matrix}\right]$.
\end{Theorem}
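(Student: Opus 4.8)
The plan is to exploit the basis completion $M=[\,\tilde{B}\ \tilde{E}\,]$ that has already been set up and to obtain $\Lambda$ by transporting a suitable skew-symmetric form through $M$. Concretely, I would search for $\Lambda$ of the shape $\Lambda=(M^{-1})^T S M^{-1}$ for a skew-symmetric $m\times m$ matrix $S$ yet to be chosen, because then $\Lambda$ is automatically skew-symmetric, and since $\tilde{B}=M\left[\begin{smallmatrix}I_n\\0\end{smallmatrix}\right]$ one computes $\tilde{B}^T=[\,I_n\ 0\,]M^T$ and hence $\tilde{B}^T\Lambda=[\,I_n\ 0\,]SM^{-1}$, i.e. the first $n$ rows of $S$ multiplied on the right by $M^{-1}$. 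The desired identity $\tilde{B}^T\Lambda=[\,D'\ 0\,]$ therefore translates, after multiplying by $M$ on the right, into a prescription for precisely the first $n$ rows of $S$.

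Carrying out that prescription with $D'=D$ first: the first $n$ rows of $M$ are $[\,B\ E\,]$, so the requirement $[\,I_n\ 0\,]S=D[\,B\ E\,]=[\,DB\ DE\,]$ pins down the top row-block of $S$. I would then define
$S=\left[\begin{smallmatrix}DB&DE\\-E^TD&0\end{smallmatrix}\right]$
and verify that it is skew-symmetric: all blocks are forced by skew-symmetry except the top-left one, which is $DB$, and this block is skew-symmetric \emph{exactly} because $D$ is a skew-symmetriser of $B$. This is the single point at which the hypothesis on $B$ enters. With this $S$ a short computation gives $\tilde{B}^T\Lambda=[\,I_n\ 0\,]SM^{-1}=D[\,I_n\ 0\,]MM^{-1}=[\,D\ 0\,]$, so $\Lambda_0:=(M^{-1})^TSM^{-1}$ already solves the problem over $\mathbb{Q}$ with $D'=D$.

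The remaining and genuinely necessary issue is \emph{integrality}, since $\Lambda_0$ acquires denominators from $M^{-1}$. Here I would choose the completion $\tilde{E}$ to consist of standard basis vectors, which is possible by the Steinitz exchange lemma as the columns of $\tilde{B}$ are linearly independent; then $M$ is an integer matrix with nonzero determinant $\delta=\det M$, so that $M^{-1}=\delta^{-1}\operatorname{adj}(M)$ has entries in $\delta^{-1}\mathbb{Z}$ while $S$ is integral, whence $\Lambda_0\in\delta^{-2}\mathbb{Z}^{m\times m}$. Scaling by $\delta^2$ clears the denominators: setting $\Lambda:=\delta^2\Lambda_0=\operatorname{adj}(M)^T S\operatorname{adj}(M)$ produces a skew-symmetric integer matrix, and by linearity $\tilde{B}^T\Lambda=[\,\delta^2 D\ 0\,]$. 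Taking $D'=\lambda D$ with $\lambda=\delta^2\in\mathbb{Q}^{+}$ completes the proof. I expect the denominator-clearing step to be the main subtlety worth spelling out carefully, as everything else is formal linear algebra and the skew-symmetrisability of $B$ is invoked only to make the block $DB$ of $S$ skew-symmetric.
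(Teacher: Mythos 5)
Your proposal is correct and is essentially the paper's own proof: both construct $\Lambda_0=M^{-T}\left[\begin{smallmatrix}DB&DE\\-E^TD&0\end{smallmatrix}\right]M^{-1}$, verify $\tilde{B}^T\Lambda_0=[\,D\ 0\,]$ by the same computation, and then clear denominators by scaling. Your treatment is if anything slightly more explicit on the two points the paper leaves implicit (why the block $DB$ makes $S$ skew-symmetric, and why an integer multiple of $\Lambda_0$ exists).
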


\begin{proof} 
	Put 
	\[
		\Lambda_0=M^{-T}\left[
			\begin{matrix}
				DB&DE\\
				-E^TD&0\\
			\end{matrix}\right]M^{-1}\in\operatorname{Mat}_{m\times m}(\mathbb{Q})
	\]
	and let $\Lambda$ be the smallest multiple of $\Lambda_0$ which lies in
	 $\operatorname{Mat}_{m\times m}(\mathbb{Z})$. 
	The matrix $\Lambda$ is skew-symmetric by construction and the relation $M^TM^{-T}=I_{m,m}$ implies
	 $\tilde{B}^TM^{-T}=\left[\begin{matrix}I_{n,n}&0_{n,m-n}\end{matrix}\right]$. Thus,
	\[
		\tilde{B}^T\Lambda_0=\left[\begin{matrix}DB&DE\\\end{matrix}\right]M^{-1}=
		D\left[\begin{matrix}B&E\\\end{matrix}\right]M^{-1}=D\left[\begin{matrix}I_{n,n}&0_{n,m-n}\\\end{matrix}\right]=
		\left[\begin{matrix}D&0\end{matrix}\right].
	\]
	Scaling the equation yields $\tilde{B}^T\Lambda=\left[\begin{matrix}D'&0\end{matrix}\right]$ 
	for some multiple $D'$ of $D$.
\end{proof}

Together with Berenstein-Zelevinsky's initial result this means that a cluster algebra 
$\mathcal{A}(\tilde{B})$ admits a quantisation if and only if $\tilde{B}$ has full rank. 
Since the rank of the exchange matrix is mutation invariant, 
one can use any seed to check whether a cluster algebra admits a quantisation.

Zelevinsky~\cite{Z} suggests to reformulate the statement in terms of bilinear forms. 
With respect to the standard basis, the matrix $\Lambda$ defines a skew-symmetric bilinear form. 
Let us change the basis. The column vectors $b_1,b_2,\ldots,b_n$ of $\tilde{B}$ are linearly independent over $\mathbb{Q}$. 
Let  $V'=\operatorname{span}_{\mathbb{Q}}(b_1,b_2,\ldots,b_n)$ be the column space of $\tilde{B}$. 
The column vectors $\tilde{e}_{n+1},\tilde{e}_{n+2},\ldots,\tilde{e}_m$ of $\tilde{E}$ extend to a basis of $V=\mathbb{Q}^m$. 
Let $V''=\operatorname{span}_{\mathbb{Q}}(\tilde{e}_{n+1},\tilde{e}_{n+2},\ldots,\tilde{e}_{m})$. 
The compatibility condition $\tilde{B}^T\Lambda =\left[\begin{matrix}D'&0\end{matrix}\right]$ says 
that for any given $D'$, the skew-symmetric bilinear form $V\times V\to \mathbb{Q}$ is completely determined on $V'\times V$, 
hence also on $V\times V'$.
Such a bilinear form can be chosen freely on $V''\times V''$ giving a $\frac{1}{2}(m-n-1)(m-n-2)$-dimensional solution space. 
In particular, the quantisation is essentially unique (i.\,e. unique up to a scalar) when there are only $0$ or $1$ frozen vertices. 

\section{A minor generating set}\label{sec:minor}

In the previous section we observed that any full-rank
skew-symmetrisable matrix $\tilde{B}$ admits a quantisation. 
In the construction yielding Theorem~\ref{thm:existence}, 
we chose some $m \times (m-n)$
integer matrix $\tilde{E}$ which completed a basis for 
$\mathbb{Q}^m$. This choice we now reformulate by giving
a generating set of integer matrices for the equation
\begin{equation}\label{eq:homog}
	\tilde{B}^T\Lambda = \left[\begin{matrix}0 &0\end{matrix}\right].
\end{equation}

As previously remarked, this ambiguity does not occur for
$0$ or $1$ frozen vertices, hence we may start 
with the case $m=n+2$ in Section~\ref{subsec:two}. 
From this result we construct 
such a generating set for arbitrary $m$ with $|m-n|>2$ in the subsequent section.
The construction below holds in more generality than what is naturally required in our setting.
Thus we now consider an arbitrary integer matrix $A$ of dimension $m\times n$ instead of $\tilde{B}$ 
and obtain the generating set for equation~\eqref{eq:homog} as a consequence.

\subsection{Minor blocks} \label{subsec:two}

In this subsection we assume $m = n+2$.

For distinct $i,j \in [m]$ define a reduced indexing set
$R(i,j)$ as the $n$-element subset of $[m]$ in which $i$ and $j$ do not occur.
To an arbitrary $m \times n$ integer matrix  $A = (a_{i,j})$ we associate 
the skew-symmetric $m\times m$-integer 
matrix $M = M(A) = \left( m_{i,j} \right)$ with entries
\begin{equation} \label{eq:defm}
	m_{i,j} = \begin{cases}
		(-1)^{i+j} \cdot \det ( A_{R(i,j)} ), & i < j,\\
		0, & i = j, \\
		(-1)^{i+j+1} \cdot \det ( A_{R(i,j)} ), & j < i.
	\end{cases}
\end{equation}

Then we first observe the following property of $M$, which carries some similarity to the well-known Pl\"ucker relations.

\begin{Lemma}\label{prop:sol}
	For $A$ an $m \times n$ integer matrix, we obtain
	\[
		A^T \cdot M = [ 0 \; 0 ].
	\]
\end{Lemma}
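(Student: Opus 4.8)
The plan is to verify the matrix identity entry by entry. The product $A^T M$ is an $n \times m$ matrix whose $(k,\ell)$ entry equals $\sum_{i=1}^m a_{i,k}\, m_{i,\ell}$ for $k \in [n]$ and $\ell \in [m]$. Since $m_{\ell,\ell} = 0$, fixing $k$ and $\ell$ reduces the claim to showing that the single scalar $\sum_{i \neq \ell} a_{i,k}\, m_{i,\ell}$ vanishes. By~\eqref{eq:defm} each $m_{i,\ell}$ with $i \neq \ell$ is, up to an explicit sign, the maximal minor $\det(A_{R(i,\ell)})$ obtained by deleting rows $i$ and $\ell$; so this sum is an alternating combination of such minors weighted by the entries $a_{i,k}$, and the natural way to see that it vanishes is to recognise it as a cofactor expansion of a determinant that is forced to be zero.

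Concretely, I would introduce, for fixed $\ell$ and $k$, the $(n+1) \times (n+1)$ matrix $\hat A$ whose rows are indexed by $[m] \setminus \{\ell\}$ and whose columns are the $n$ columns of $A$ restricted to those rows, together with one additional copy of the $k$-th column of $A$ appended on the right. Because $\hat A$ then contains the $k$-th column twice, it has two identical columns and hence $\det \hat A = 0$. Expanding $\det \hat A$ along its last column produces precisely a sum over $i \in [m] \setminus \{\ell\}$ of the entries $a_{i,k}$ multiplied by signed $n \times n$ minors: deleting the last column returns the restriction of $A$ to the rows $[m]\setminus\{\ell\}$, and further deleting the row indexed by $i$ leaves exactly the rows $[m] \setminus \{\ell, i\} = R(i,\ell)$, giving the minor $\det(A_{R(i,\ell)})$.

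It remains to check that the cofactor signs produced by this expansion agree, up to one global factor, with the signs prescribed in~\eqref{eq:defm}. This is the only delicate point, and I would handle it by splitting into the cases $i < \ell$ and $i > \ell$. The position of row $i$ within the increasingly ordered set $[m]\setminus\{\ell\}$ is $i$ when $i < \ell$ and $i-1$ when $i > \ell$, so the expansion sign $(-1)^{\mathrm{pos}(i) + (n+1)}$ equals $(-1)^{i+n+1}$ in the first case and $(-1)^{i+n}$ in the second. Comparing with the signs $(-1)^{i+\ell}$ for $i<\ell$ and $(-1)^{i+\ell+1}$ for $i>\ell$ in~\eqref{eq:defm}, both cases differ from the corresponding $m_{i,\ell}$-sign by the same factor $(-1)^{n+\ell+1}$, which crucially does not depend on $i$. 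Hence the Laplace expansion equals $(-1)^{n+\ell+1} \sum_{i \neq \ell} a_{i,k}\, m_{i,\ell}$; since this expansion is $\det \hat A = 0$ while the global factor is nonzero, the desired sum vanishes. The main obstacle is therefore purely this sign bookkeeping: once the positions and parities are tracked carefully, the vanishing is immediate from the repeated-column argument.
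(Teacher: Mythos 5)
Your proposal is correct and follows essentially the same route as the paper: both form the $(n+1)\times(n+1)$ matrix obtained by deleting row $\ell$ of $A$ and appending a repeated copy of the $k$-th column (the paper's $A_j^i$), note its determinant vanishes because of the duplicated column, and identify the Laplace expansion along that last column with the sum $\sum_{i\neq\ell} a_{i,k} m_{i,\ell}$ up to a global sign. Your explicit verification that the sign discrepancy $(-1)^{n+\ell+1}$ is independent of $i$ fills in the detail the paper dismisses as ``up to sign change''.
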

\begin{proof}
	By definition, we have
	\[
		\left[ A^T \cdot M \right]_{i,j} = \sum_{k=1}^m a_{k,i} m_{k,j} \notag \\
		= \sum_{k\in [m]\backslash\{j\}} a_{k,i} m_{k,j}. 
	\]
	Now let $A_j$ be the matrix we obtain from $A$ by removing the $j$-th row and 
	$A_j^i$ the matrix that results from attaching the $i$-th column of
	$A_j$ to itself on the right. Then $\det(A_j^i) = 0$ and we observe that 
	using the Laplace expansion along the last column, we obtain the right-hand side 
	of the above equation up to sign change. The claim follows.
\end{proof}

\begin{Example}
	Let $\alpha, a,b,c$ and $d$ be positive integers. Then consider the quiver $Q$ given by:
	\begin{center}
	\begin{tikzpicture}[scale=0.76] 
		\node[rounded corners, draw] at (0,0) (v1) {$1$};
		\node[rounded corners, draw] at (4,0) (v2) {$2$};
		\node[rounded corners, fill=black!20] at (0,-4) (v3) {$3$};
		\node[rounded corners, fill=black!20] at (4,-4) (v4) {$4$};
		\path[->,thick,shorten <=2pt,shorten >=2pt,>=stealth'] (v1) edge node [above] {$\alpha$} (v2);
		\path[->,thick,shorten <=2pt,shorten >=2pt,>=stealth'] (v3) edge node [left=1pt] {$a$} (v1);
		\path[->,thick,shorten <=2pt,shorten >=2pt,>=stealth'] (v3) edge node [left=20pt, below=3pt] {$b$} (v2);
		\path[->,thick,shorten <=2pt,shorten >=2pt,>=stealth'] (v4) edge node [right=20pt, below=6pt] {$c$} (v1);
		\path[->,thick,shorten <=2pt,shorten >=2pt,>=stealth'] (v4) edge node [right=1pt] {$d$} (v2);
	\end{tikzpicture}
	\end{center}
	Thus the matrices $\tilde{B}$ and $M$ are 
	\[
		\tilde{B} = \begin{bmatrix}
			0 & \alpha\\
			-\alpha & 0 \\
			a & b \\
			c & d
		\end{bmatrix}, \qquad 
		M = \begin{bmatrix}
			0 & - ad + bc & -\alpha d & \alpha b \\
			ad - bc & 0 & \alpha c & -\alpha a\\
			\alpha d & -\alpha c & 0 &  -\alpha^2 \\
			-\alpha b & \alpha a & \alpha^2 & 0
		\end{bmatrix},
	\]
	and we immediately see the result of the previous lemma, namely $\tilde{B}^T \cdot M = [ 0 \; 0].$
\end{Example}

\subsection{Composition of minor blocks} \label{subsec:many}

In this section let $n+2<m$ and as before, let 
$A\in \mat{m}{n}{Z}$ be some rectangular integer matrix. 
Choose a subset $F\subset [m]$ of cardinality $n$ and 
obtain a partition of the indexing set $[m]$ of the rows of $A$ as $[m] = F \sqcup R$. 
Note that $|R| = m-n$.
For distinct $i,j \in R$ set the \emph{extended indexing set associated to $i, j$} to be 
\[
	E(i,j) := F\cup \{ i,j\}.
\]
By Lemma~\ref{prop:sol} (after a reordering of rows) and slightly abusing the notation, there exists an $ (n+2) \times (n+2) $ matrix 
$M_{E(i,j)}= \left( m_{r, s} \right) $ such that
\begin{equation}
	A_{E(i,j)}^T \cdot M_{E(i,j)} = [0 \; 0 ]. \label{eq:sol2}
\end{equation}
Now let $\mathfrak{M}_{E(i,j)} = \mathfrak{M}_{E(i,j)}(A) = \left( \mathfrak{m}_{r, s} \right) $ be the 
\emph{enhanced solution matrix associated to $i, j$}, 
the $ m \times m $ integer matrix we obtain from $M_{E(i,j)}$ by 
filling the entries labeled by $E(i,j) \times E(i,j)$ with $M_{E(i,j)}$ 
consecutively and setting all other entries to zero.

\begin{Example}
	Consider the quiver $Q$ with associated exchange matrix $\tilde{B}$ as below:
	\begin{center}
	\begin{tikzpicture}[scale=0.76] 
		\node at (-1,-1) {$Q:$};
		\node[rounded corners, draw] at (2,0) (v1) {$1$};
		\node[rounded corners, draw] at (6,0) (v2) {$2$};
		\node[rounded corners, fill=black!20] at (0,-2) (v3) {$3$};
		\node[rounded corners, fill=black!20] at (4,-2) (v5-227) {$4$};
		\node[rounded corners, fill=black!20] at (8,-2) (v5-228) {$5$};
		\node at (12,-1) {and $\qquad \tilde{B} = \begin{bmatrix} 0 & \alpha \\ -\alpha & 0 \\ a & 0 \\ b & 0 \\ 0 & c \end{bmatrix} $.};
		\path[->,thick,shorten <=2pt,shorten >=2pt,>=stealth'] (v1) edge node [below] {$\alpha$} (v2);
		\path[->,thick,shorten <=2pt,shorten >=2pt,>=stealth'] (v3) edge node [below] {$a$} (v1);
		\path[->,thick,shorten <=2pt,shorten >=2pt,>=stealth'] (v5-227) edge node [below] {$b$} (v1);
		\path[->,thick,shorten <=2pt,shorten >=2pt,>=stealth'] (v5-228) edge node [below] {$c$} (v2);
	\end{tikzpicture}
	\end{center}
	We choose $F=\{1,2\}$, assuming $\alpha \neq 0$ and get the following matrices $M_{E(i,j)}$ 
	and their  enhanced solution matrices for distinct $i,j \in \{3,4,5\}$:
\begin{align*}
	M_{E(3, 4)} &= \begin{bmatrix} 
		0 & 0 & 0 & 0 \\ 
		0 & 0 & \alpha b & -\alpha a \\ 
		0 & -\alpha b & 0 & -\alpha^2 \\
		0 & \alpha a & \alpha^2 & 0
	\end{bmatrix},  &&\mathfrak{M}_{E(3, 4)} = \left[ \begin{array}{cccc>{\columncolor{black!15}}c} 
		0 & 0 & 0 & 0 & 0\\ 
		0 & 0 & \alpha b & -\alpha a & 0\\ 
		0 & -\alpha b & 0 & -\alpha^2 & 0\\
		0 & \alpha a & \alpha^2 & 0 & 0\\
		\rowcolor{black!15}
		0 & 0 & 0 & 0 & 0
	\end{array} \right], \\
	M_{E(3, 5)} &= \begin{bmatrix} 
		0 & -ac & -\alpha c & 0 \\ 
		ac & 0 & 0 & -\alpha a \\
		\alpha c & 0 & 0 & -\alpha^2 \\
		0 & \alpha a & \alpha^2 & 0
	\end{bmatrix},   && \mathfrak{M}_{E(3, 5)} = \left[ \begin{array}{ccc>{\columncolor{black!15}}cc}
		0 & -ac & -\alpha c & 0 & 0 \\ 
		ac & 0 & 0 & 0 & -\alpha a \\
		\alpha c & 0 & 0 & 0 & -\alpha^2 \\
		\rowcolor{black!15}
		0 & 0 & 0 & 0 & 0\\
		0 & \alpha a & \alpha^2 & 0 & 0
	\end{array} \right], \\
	M_{E(4, 5)} &= \begin{bmatrix} 
		0 & -bc & -\alpha c & 0 \\ 
		bc & 0 & 0 & -\alpha b \\
		\alpha c & 0 & 0 & -\alpha^2 \\
		0 & \alpha b & \alpha^2 & 0
	\end{bmatrix},  && \mathfrak{M}_{E(4, 5)} = \left[ \begin{array}{cc>{\columncolor{black!15}}ccc} 
		0 & -bc & 0 & -\alpha c & 0 \\ 
		bc & 0 & 0 & 0 & -\alpha b \\
		\rowcolor{black!15}
		0 & 0 & 0 & 0 & 0\\
		\alpha c & 0 & 0 & 0 & -\alpha^2 \\
		0 & \alpha b & 0 & \alpha^2 & 0
	\end{array} \right].
\end{align*}
	Here we highlighted the added $0$-rows/columns in gray.
	We observe by considering the lower right $3 \times 3$ matrices of 
	$\mathfrak{M}_{E(3, 4)}, \mathfrak{M}_{E(3, 5)}, \mathfrak{M}_{E(4, 5)}$ that these matrices
	are linearly independent. 
	This we generalise in the theorem below.
\end{Example}

\begin{Theorem}\label{prop:gensol}
	Let $A\in \mathbb{Z}^{m \times n}$ as above. 
	Then for distinct $i,j \in R$ we have 
	\[
		A^T \cdot \mathfrak{M}_{E(i,j)} = 0.
	\]
	Furthermore, if $A$ is of full rank and $F$ is chosen such that
	the submatrix $A_F$ yields the rank, 
	then the matrices $\mathfrak{M}_{E(i,j)}$ are linearly independent.
\end{Theorem}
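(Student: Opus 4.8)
The first claim, that $A^T \cdot \mathfrak{M}_{E(i,j)} = 0$, should follow almost immediately from Lemma~\ref{prop:sol}. The enhanced matrix $\mathfrak{M}_{E(i,j)}$ is zero outside the block indexed by $E(i,j) \times E(i,j)$, so when I compute the product $A^T \cdot \mathfrak{M}_{E(i,j)}$, only the rows of $A$ indexed by $E(i,j)$ can contribute to any nonzero entry. More precisely, for a fixed column $s \in E(i,j)$ of $\mathfrak{M}_{E(i,j)}$, the entries $\mathfrak{m}_{r,s}$ vanish unless $r \in E(i,j)$; and for a column $s \notin E(i,j)$ the whole column is zero. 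Hence each entry of the product reduces to a sum over $r \in E(i,j)$, which is exactly an entry of $A_{E(i,j)}^T \cdot M_{E(i,j)}$, and this vanishes by~\eqref{eq:sol2}. So the first part is just bookkeeping: I would spell out that restricting to the relevant rows/columns turns the global product into the local one governed by Lemma~\ref{prop:sol}.

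\textbf{The linear independence is the substantive part.} My approach is to exploit the extra structure coming from the assumption that $A_F$ already realises the rank of $A$. Indices $i,j$ range over the $(m-n)$-element set $R$, so there are $\binom{m-n}{2}$ enhanced matrices, which matches the dimension $\tfrac12(m-n)(m-n-1)$ anticipated for the solution space in Section~\ref{subsec:existence}. The natural strategy is to find, for each pair $(i,j)$, a distinguished matrix entry that $\mathfrak{M}_{E(i,j)}$ ``controls'' and that the other enhanced matrices cannot reach. I would look at the entry in position $(i,j)$ with $i,j \in R$: for $\mathfrak{M}_{E(i,j)}$ this entry equals $\pm\det(A_{E(i,j) \setminus \{i,j\}}) = \pm\det(A_F)$ up to sign, which is nonzero precisely because $A_F$ yields the full rank. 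For a different pair $(i',j')$, the block of $\mathfrak{M}_{E(i',j')}$ occupies rows and columns indexed by $F \cup \{i',j'\}$, so if $\{i,j\} \not\subseteq F \cup \{i',j'\}$ — equivalently if $\{i,j\} \neq \{i',j'\}$, since $i,j,i',j' \in R$ and $R \cap F = \emptyset$ — then at least one of $i,j$ lies outside $E(i',j')$ and the $(i,j)$ entry of $\mathfrak{M}_{E(i',j')}$ is zero.

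This gives the cleanest route to independence: the map sending each $\mathfrak{M}_{E(i,j)}$ to its $(i,j)$-entry, restricted to the coordinate positions $\{(i,j) : i,j \in R \text{ distinct}\}$, is essentially diagonal. Concretely, order the pairs and consider the $\binom{m-n}{2} \times \binom{m-n}{2}$ matrix whose $(i,j),(i',j')$ entry is the $(i,j)$-coordinate of $\mathfrak{M}_{E(i',j')}$; by the observation above this matrix is diagonal with nonzero diagonal entries $\pm\det(A_F)$, hence invertible, so the $\mathfrak{M}_{E(i,j)}$ are linearly independent over $\mathbb{Q}$. \textbf{The main obstacle I anticipate} is purely combinatorial sign-bookkeeping: one must verify that the ``distinguished'' entry of $M_{E(i,j)}$ really is (up to sign) $\det(A_F)$ after the reordering of rows that identifies $E(i,j)$ with $[n+2]$, and that the $(i,j)$-coordinates of the \emph{other} enhanced matrices vanish for the stated reason rather than by accident. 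Once the indexing convention for the reordering in~\eqref{eq:sol2} is pinned down, both checks reduce to the definition~\eqref{eq:defm} of $M$ together with the fact that removing rows $i$ and $j$ from $A_{E(i,j)}$ leaves exactly $A_F$.
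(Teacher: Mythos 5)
Your proposal is correct and follows essentially the same route as the paper: the first part reduces the global product to the local identity of Lemma~\ref{prop:sol} exactly as in the paper, and for independence the paper likewise observes that the only nonzero entries of $\mathfrak{M}_{E(i,j)}$ in the block indexed by $R\times R$ are $\pm\det(A_F)$ at positions $(i,j)$ and $(j,i)$, which no other $\mathfrak{M}_{E(i',j')}$ can reach. Your explicit ``diagonal evaluation matrix'' phrasing just spells out what the paper's Figure~1 conveys pictorially.
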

\begin{proof}
	By construction, for $s \in R\backslash\{i,j\}$ the $s$-th column of
	$ \mathfrak{M}_{E(i,j)}$ contains nothing but zeros. Hence for arbitrary
	$r\in [m]$, we have 
	\begin{equation}
		\left[ A^T \cdot \mathfrak{M}_{E(i,j)} \right]_{r,s} = 0. \label{eq:proofconstr}
	\end{equation}
	Now let $s \in E(i,j)$. Then
	\[
		\sum_{k=1}^m a_{k,r} \mathfrak{m}_{k,s} = \sum_{k \in E(i,j)} a_{k,r} m_{k,s}=0,
	\]
	by Lemma~\ref{prop:sol}.
	Without loss of generality, assume $i < j$ and  
	$F=[n]$. Then by assumption on the rank, 
	$\beta := (-1)^{i+j} \det \left( A_{[n]} \right) \neq 0$
	and by construction, $\mathfrak{M}_{E(i,j)}$ is of the form as in Figure~\ref{fig:thm}.
	Then $\pm \beta$ is the only entry of the submatrix of $A$ indexed by $[n+1,m] \times [n+1,m]$. 
	This immediately provides the linear independence.
\end{proof}

\begin{figure}[!ht] \label{fig:thm}
 	\[
		\begin{blockarray}{cc|ccccccc}
			& 1\, \cdots \, n& n+1 & \cdots & i & \cdots & j & \cdots & m\\ 
			\begin{block}{c[c|ccccccc]}
			1 & & & & & & & & \\  
			\vdots & \Big{$\ast$} & & & & \Big{$\ast$} & & & \\  
			n & & & & & & & & \\\cline{1-9}
			n+1    & & 0 & \cdots & 0 & \cdots & 0 & \cdots & 0 \\  
			\vdots & & \vdots & \ddots& & & & & \vdots \\  
			i      & & 0 & & 0 & &\beta & & 0 \\
			\vdots & \Big{$\ast$} & \vdots & & & \ddots & & & \vdots\\
			j      & & 0 & & -\beta & & 0 & & 0 \\
			\vdots & & \vdots & & & & & \ddots & \vdots \\
			m      & & 0 & \cdots & 0 & \cdots & 0 & \cdots & 0 \\  
			\end{block}
		\end{blockarray}.
	\]
	\caption{An example of the form of enhanced solution matrices}
\end{figure}

As an immediate consequence we obtain that there are at least 
$\binom{m-n}{2}$ many $m\times m $ integer matrices 
$M$ satisfying 
\[
	A^T \cdot M = [ 0 \; 0 ].
\] 
Together with the final remark from Subsection~\ref{subsec:existence},
we can thus conclude that the above constructed matrices
form a basis of the homogeneous equation~\eqref{eq:homog}.

\section{Conclusion}

When does a quantisation for a given cluster algebra $\mathcal{A}(\tilde{B})$ exist and how unique is it? 

The answer we have seen above: it depends on the rank of $\tilde{B}$ and the number of frozen indices.
If the rank of $\tilde{B}$ is small then no quantisation exists. 
On the other hand, if $\tilde{B}$ is of full rank, we distinguish two cases. 
If there is no or only one frozen index, the quantisation is essentially unique. 
Otherwise, we remarked at the end of Section~\ref{subsec:existence} 
that the solution space of matrices satisfying the 
compatibility equation~\eqref{eq:comp} to a fixed skew-symmetriser  
is a vector space over the rational numbers of dimension 
$\left( \begin{smallmatrix} m-n \\ 2 \end{smallmatrix} \right)$. 
In particular, this space is not empty and it contains at least one solution $\Lambda_0$. 
Since Theorem~\ref{prop:gensol} with $A=\tilde{B}$ together with 
an appropriate indexing set $F$ yields a 
linearly independent set of $\left( \begin{smallmatrix} m-n \\ 2 \end{smallmatrix} \right)$ 
solutions to the homogeneous compatibility equation,
all other solutions $\Lambda$ can be constructed as the sum of $\Lambda_0$ and 
a linear combination of all $\mathfrak{M}_{E(i,j)}$ for $i,j\in [m]$. 
In the special case where the principal part of $\tilde{B}$ is already invertible, 
quantisations of full subquivers with all mutable and two frozen vertices 
yield a basis of the homogeneous solution space.

To construct quantum seeds, it is necessary to have integer solutions $\Lambda$ 
for the compatibility equation. Both $\Lambda$ from Theorem~\ref{thm:existence}
and the enhanced solution matrices $\mathfrak{M}_{E(i,j)}$ have integer entries.
However, they do not generate the semigroup of all integer quantisations in general.

What came as a surprise to us is the simple structure of the matrices $\mathfrak{M}_{E(i,j)}$. 
Their computation only depends on $(n+2)\times(n+2)$ minors of $\tilde{B}$, 
which can be realised with little effort. 
The authors used SAGE in their investigations of the problem and 
the first author makes a complementary website available at~\cite{G}. 
There, one can follow the construction of the matrices above in detail, 
compute a general solution to~\eqref{eq:comp} and a generating set of matrices to~\eqref{eq:homog}. 



\end{document}